\newtheorem{theorem}{Theorem}
\newtheorem{lemma}{Lemma}
\newtheorem*{theoremb}{Theorem B (Wolff)}
\newtheorem*{theorema} {Theorem A (Wolff)}
\begin{document}

\title [Wolff's Problem of Ideals]
{Wolff's Problem of Ideals in the Multipler Algebra on Dirichlet Space}
\author{Debendra P. Banjade and Tavan T. Trent}
\address{Department of Mathematics\\
         The University of Alabama\\
         Box 870350\\
         Tuscaloosa, AL  35487-0350\\
         (205)758-4275}
\email{dpbanjade@crimson.ua.edu, ttrent@as.ua.edu}
\subjclass[2010]{Primary: 30H50, 31C25, 46J20}
\keywords{corona theorem, Wolff's theorem, Dirichlet space}
\begin{abstract}
We establish an analogue of Wolff's theorem on ideals in $H^{\infty}(\mathbb{D})$ for the multiplier algebra of Dirichlet space.
\end{abstract}
\maketitle

In 1962 Carleson [C] proved his famous \lq\lq Corona theorem" characterizing when a finitely generated ideal in
$H^{\infty}(\mathbb{D})$ is actually all of $H^{\infty}(\mathbb{D})$.  Independently, Rosenblum [R], Tolokonnikov [To], and
Uchiyama gave an infinite version of Carleson's work on $H^{\infty}(\mathbb{D})$.  In an effort to classify ideal membership
for finitely-generated ideals in
$H^{\infty}(\mathbb{D})$, Wolff [G] proved the following version:

\begin{theorema}
If
\begin{align}
\{f_j\}_{j=1}^n \subset H^{\infty}(\mathbb{D}), H \in H^{\infty}(\mathbb{D}) \quad \text{and}\notag\\
|H(z)| \le \left( \sum_{j=1}^n \, |f_j(z)|^2\right)^{\frac 1{2}} \;\; \text{for all } \, z \in \mathbb{D},
\end{align}
then
\[
H^3 \in \mathcal{I} ( \{ f_j\}_{j=1}^n),
\]

\medskip \noindent
the ideal generated by $\{f_j\}_{j=1}^n$ in $H^{\infty}(\mathbb{D})$.
\end{theorema}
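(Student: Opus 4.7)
\medskip
\noindent\textbf{Proof proposal.}
The plan is to adapt Carleson's $\bar\partial$-method from the proof of the Corona Theorem; the three powers of $H$ are exactly what is needed to make the Koszul data induce a Carleson measure and then to solve the resulting $\bar\partial$-system in $L^\infty(\mathbb{D})$.

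Let $\Phi=\sum_{j=1}^n|f_j|^2$. By a standard approximation (replace $\Phi$ with $\Phi+\varepsilon$ and pass $\varepsilon\downarrow 0$ via weak-$*$ compactness of the unit ball of $H^\infty(\mathbb{D})$), one may first assume $\Phi\ge\varepsilon>0$ on $\mathbb{D}$ and aim for uniform-in-$\varepsilon$ estimates. Set $\varphi_j=\bar f_j/\Phi$, so that the cocycle identities $\sum_j f_j\varphi_j\equiv 1$ and $\sum_j f_j\bar\partial\varphi_j\equiv 0$ hold. The smooth (non-holomorphic) functions $g_j^{(0)}:=H^3\varphi_j$ already satisfy $\sum_j f_jg_j^{(0)}=H^3$. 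To upgrade them to holomorphic functions without disturbing this identity, add a Koszul antisymmetric correction
\[
g_j\;=\;g_j^{(0)}+\sum_{k=1}^n a_{jk}f_k,\qquad a_{jk}=-a_{kj},
\]
whose antisymmetry preserves $\sum_j f_jg_j=H^3$. The holomorphy requirement $\bar\partial g_j=0$ reduces, via the two cocycle identities, to the $\bar\partial$-system
\[
\bar\partial a_{jk}\;=\;H^3\bigl(\varphi_j\bar\partial\varphi_k-\varphi_k\bar\partial\varphi_j\bigr)\;=\;\frac{H^3}{\Phi^2}\bigl(\bar f_j\overline{\partial f_k}-\bar f_k\overline{\partial f_j}\bigr),
\]
the apparent $\Phi^{-3}$ singularity cancelling by antisymmetry.

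The main obstacle is to produce bounded antisymmetric solutions $a_{jk}\in L^\infty(\mathbb{D})$ of this $\bar\partial$-system. The Lagrange-type estimate $|\bar f_j\overline{\partial f_k}-\bar f_k\overline{\partial f_j}|^2\le 2\Phi\bigl(|f_j'|^2+|f_k'|^2\bigr)$ together with the hypothesis $|H|^6\le\Phi^3$ yields the pointwise bound
\[
|\bar\partial a_{jk}(z)|^2\;\le\;2\bigl(|f_j'(z)|^2+|f_k'(z)|^2\bigr).
\]
Since $f_l\in H^\infty\subset H^2$, the classical Littlewood--Paley identity makes $|f_l'(z)|^2(1-|z|^2)\,dA(z)$ a Carleson measure on $\mathbb{D}$, whence $|\bar\partial a_{jk}|^2(1-|z|^2)\,dA$ is Carleson as well. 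The cubic exponent on $H$ is precisely what is needed here: three factors of $|H|^2\le\Phi$ match the $\Phi^3$ produced in the denominator by the Koszul formula and the Lagrange inequality. The final step then invokes Wolff's $\bar\partial$-solvability lemma to pass from Carleson-measure data to an $L^\infty$-solution of $\bar\partial a_{jk}=(\,\cdot\,)$, rather than the $BMO$-solution that a generic Carleson datum would give; this upgrade exploits the holomorphic structure of the data together with $H^1$--$BMO$ duality (in the spirit of Varopoulos and Jones), and it is where I expect the main technical difficulty of the argument to lie.

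With bounded antisymmetric $a_{jk}$ in hand, the functions $g_j=H^3\varphi_j+\sum_k a_{jk}f_k$ lie in $H^\infty(\mathbb{D})$ and satisfy $\sum_j f_jg_j=H^3$. Undoing the regularization $\varepsilon\downarrow 0$ by weak-$*$ compactness of the unit ball of $H^\infty(\mathbb{D})$ yields $H^3\in\mathcal{I}(\{f_j\}_{j=1}^n)$, as required.
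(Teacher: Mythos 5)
The paper itself does not prove Theorem~A: it is stated as background (Wolff's classical $H^\infty$ result) and cited from Garnett [G], so there is no in-paper proof to compare yours against. Judged on its own, your sketch follows the standard Koszul/$\bar\partial$ route, which is indeed the route Wolff and Garnett take, and most of the setup is sound: the antisymmetric Koszul correction, the cancellation of the $\Phi^{-3}$ singularity, the Lagrange-type estimate, and the use of the cubic power of $H$ to absorb the remaining $\Phi^{-3}$ factor and produce the pointwise bound $|\bar\partial a_{jk}|^2\le 2\,(|f_j'|^2+|f_k'|^2)$. (A minor attribution point: the Carleson property of $|f_l'(z)|^2(1-|z|^2)\,dA(z)$ comes from $f_l\in H^\infty\subset BMOA$; the Littlewood--Paley identity for $H^2$ gives only finiteness of that measure, not the Carleson box estimate.)

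There is, however, a genuine gap at the final and crucial step. Wolff's $\bar\partial$-solvability lemma --- the statement that lets you pass from Carleson-type data to an $L^\infty(\partial\mathbb{D})$ solution of $\bar\partial a_{jk}=G_{jk}$ --- requires \emph{two} Carleson-measure hypotheses on $G_{jk}:=H^3(\varphi_j\bar\partial\varphi_k-\varphi_k\bar\partial\varphi_j)$: that both $|G_{jk}(z)|^2(1-|z|^2)\,dA(z)$ \emph{and} $|\bar\partial G_{jk}(z)|(1-|z|^2)\,dA(z)$ are Carleson measures. You verify only the first. The second is a harder and entirely separate estimate: computing $\bar\partial G_{jk}$ produces additional terms involving second derivatives of the $f_l$ and higher powers of $\Phi$ in the denominator, and controlling them again uses $|H|\le\sqrt{\Phi}$ together with corona-type subharmonicity arguments; this is where the bulk of the analytic work in Wolff's proof actually lies. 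Your closing remark attributes the remaining difficulty to upgrading a generic $BMO$ solution to an $L^\infty$ one via ``the holomorphic structure of the data,'' but that is not what is missing --- what is missing is precisely the verification of the second Carleson hypothesis. To complete the argument, state Wolff's $\bar\partial$-lemma with both hypotheses explicitly, compute $\bar\partial G_{jk}$, and supply the Carleson estimate for $|\bar\partial G_{jk}(z)|(1-|z|^2)\,dA(z)$.
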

It is known that (1) is not, in general, sufficient for $H$ itself to be in $\mathcal{I} ( \{ f_j\}_{j=1}^n)$, see Rao [G];
or even for $H^2$ to be in $\mathcal{I} ( \{ f_j\}_{j=1}^n)$, see Treil [T].

\medskip
Recall that if we consider the radical of the ideal, $\mathcal{I} ( \{ f_j\}_{j=1}^n)$,  i.e.
\[
Rad \, ( \{ f_j\}_{j=1}^n)) \overset{def}{=} \{h \in H^{\infty}(\mathbb{D}): \, \exists \, n \in \mathbb{N} \, \text{ with } \, h^n \in
\mathcal{I} ( \{ f_j\}_{j=1}^n)\},
\]

\medskip \noindent
then (1) gives a characterization of radical ideal membership.

\newpage
That is:
\begin{theoremb}
Let $\{H, f_j: \, j=1, \dots, n\} \subset H^{\infty}(\mathbb{D})$.  Then \, $H \in
Rad \, ( \{ f_j\}_{j=1}^n)$ \, if and only if there exists $C_0 < \infty$ and $ m \in \mathbb{N}$ such that
\[
|H^m(z)| \le C_0   \sum_{j=1}^n \, |f_j(z)|^2 \; \text{ for all } z \in \mathbb{D}.
\]
\end{theoremb}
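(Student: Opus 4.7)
The plan is to derive Theorem B directly from Theorem A, exploiting only the uniform boundedness of the $f_j$ on $\mathbb{D}$.

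The necessity direction ($\Rightarrow$) is pure algebra. If $H \in Rad(\{f_j\}_{j=1}^n)$, there exist $k \in \mathbb{N}$ and $g_1,\dots,g_n \in H^\infty(\mathbb{D})$ with $H^k = \sum_{j=1}^n g_j f_j$. Squaring this identity pointwise and applying the Cauchy--Schwarz inequality gives
\begin{equation*}
|H^{2k}(z)| = |H^k(z)|^2 \le \Bigl(\sum_{j=1}^n |g_j(z)|^2\Bigr) \sum_{j=1}^n |f_j(z)|^2 \le C_0 \sum_{j=1}^n |f_j(z)|^2,
\end{equation*}
with $C_0 := \sum_{j=1}^n \|g_j\|_\infty^2$, and taking $m = 2k$ finishes this half.

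For sufficiency ($\Leftarrow$), suppose $|H^m(z)| \le C_0 \sum_{j=1}^n |f_j(z)|^2$ on $\mathbb{D}$. Because $M := \sum_{j=1}^n \|f_j\|_\infty^2 < \infty$, the right-hand side is dominated by $C_0 M^{1/2}\bigl(\sum_{j=1}^n |f_j(z)|^2\bigr)^{1/2}$. After dividing $H^m$ by the appropriate constant, Theorem A applies to $H^m$ in place of $H$ and produces $H^{3m} \in \mathcal{I}(\{f_j\}_{j=1}^n)$, so $H \in Rad(\{f_j\}_{j=1}^n)$.

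I do not foresee any genuine obstacle: Theorem B is essentially a bookkeeping consequence of Theorem A. The only subtle point is the passage from the ``square'' bound $\sum_{j} |f_j|^2$ appearing in Theorem B to the ``square-root'' bound $(\sum_{j} |f_j|^2)^{1/2}$ required by Theorem A, which is handled by the uniform boundedness of the $f_j$ and costs only a multiplicative constant (not an additional power of $H$).
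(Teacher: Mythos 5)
Your proposal is correct, and it is exactly the bookkeeping argument the paper has in mind: the paper offers no separate proof of Theorem B, merely remarking that ``Wolff gets Theorem B for free'' from Theorem A, and your derivation---Cauchy--Schwarz plus $\|g_j\|_\infty$ for necessity, and the passage $\sum_j |f_j|^2 \le M^{1/2}\bigl(\sum_j |f_j|^2\bigr)^{1/2}$ with $M=\sum_j\|f_j\|_\infty^2$ to rescale into Theorem A's hypothesis for sufficiency---is that ``free'' argument spelled out. Nothing is missing.
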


For the algebra of multipliers on Dirichlet space, the analogue of the corona theorem was established in Tolokonnikov [To] and, for infinitely
many generators, this was done in Trent [Tr2].  The purpose of this paper is to establish an analogue of Wolff's results, Theorems A and B, for the algebra of multipliers on Dirichlet space.

\medskip
We use $\mathcal{D}$ to denote the Dirichlet space on the unit disk, $\mathbb{D}$. That is,
\begin{multline*}
\mathcal{D} =  \{ \, f:\; \mathbb{D}\rightarrow\mathbb{C}\mid \; f \text{ is analytic on } \mathbb{D} \text{ and for }
 f(z)=\overset{\infty}{\underset{n=0}{\sum}}a_{n}\, z^n,\\
\left\Vert f \right\Vert_{\mathcal{D}}^{2}
=\underset{n=0}{\overset{\infty}{\sum}}\, (n+1)\left| a_{n}\right|^{2}< \infty \}.
\end{multline*}

\medskip
We will use other equivalent norms for smooth functions in $\mathcal{D}$
as follows,
\[
\left\Vert f\right\Vert _{\mathcal{D}}^{2}=\int_{-\pi}^{\pi}\vert f\vert^{2}d\sigma+\int_{D}\vert f\,'(z)\vert^{2}\, dA(z)\qquad \mbox{ and}
\]
\[
\left\Vert f \right\Vert _{\mathcal{D}}^{2}=\int_{-\pi}^{\pi}\vert f\vert^{2}d\sigma +
\int_{-\pi}^{\pi}\int_{-\pi}^{\pi}\frac{\vert f(e^{it})-f(e^{i\theta})\vert^{2}}{\vert e^{it}-e^{i\theta}\vert^{2}}\, d\sigma d\sigma.
\]

Also, we will consider $\overset{\infty}{\underset{1}{\oplus}}\, \mathcal{D}$
as an $l^{2}$-valued Dirichlet space. The norms in this case are exactly
as above but we will replace the absolute value by $l^{2}$-norms.
Moreover, we use $\mathcal{HD}$ to denote the harmonic Dirichlet
space (restricted to the boundary of $\mathbb{D}$). The functions in $\mathcal{D}$
have only vanishing negative Fourier coefficients, whereas the functions in $\mathcal{HD}$
may have negative fourier coefficients which do not vanish. Again, if $f$ is smooth
on $\partial D$, the boundary of the unit disk $D$, then
\[
\left\Vert f \right\Vert _{\mathcal{HD}}^{2}=\int_{-\pi}^{\pi}\vert f\vert^{2}\, d\sigma +
\int_{-\pi}^{\pi}\int_{-\pi}^{\pi}\frac{\vert f(e^{it})-f(e^{i\theta})\vert^{2}}{\vert e^{it}-e^{i\theta}\vert^{2}}\, d\sigma d\sigma.
\]

We use $\mathcal{M}(\mathcal{D})$ to denote the multiplier
algebra of Dirichlet space, defined as:
$\mathcal{M}(\mathcal{D})=\left\{ \phi\in\mathcal{D}:\,\phi f\in\mathcal{D}\; \text{ for all } f\in\mathcal{D}\right\},$
and we will denote the multiplier algebra of harmonic Dirichlet space
by $\mathcal{M}(\mathcal{HD})$, defined similarly (but only on $\partial D$).

Given $\left\{ f_{j}\right\} _{j=1}^{\infty}\subset\mathcal{M}(\mathcal{D})$,
we consider $F(z)=\left(f_{1}(z),\, f_{2}(z),\dots \right)$ for $z\in D.$
We define the row operator $M_{F}^{R}: \overset{\infty}{\underset{1}{\oplus}}\, \mathcal{D\rightarrow\mathcal{D}}$ \,
by
\[
M_{F}^{R}\left(\left\{ h_{j}\right\} _{j=1}^{\infty}\right)=\overset{\infty}{\underset{j=1}{\sum}}\,f_{j}h_{j} \,
\mbox{ for } \left\{ h_{j}\right\} _{j=1}^{\infty}\in\overset{\infty}{\underset{1}{\oplus}}\, \mathcal{D}.
\]
Similarly, define the column operator $M_{F}^{C}:\mathcal{D}\rightarrow\overset{\infty}{\underset{1}{\oplus}}\, \mathcal{D}$\,
by
\[
M_{F}^{C}\left(h\right)=\left\{ f_{j}h\right\} _{j=1}^{\infty}
\, \mbox{ for } \,h\in\mathcal{D}.
\]

\medskip
We notice that $\mathcal{D}$ is a reproducing kernel (r.k.) Hilbert
space with r.k.
\[
k_{w}(z)=\frac{1}{z\overline{w}}\, log\left(\frac{1}{1-z\overline{w}}\right)
\, \mbox{ for } z,\, w\in \mathbb{D}
\]
 and it is well known (see [AM]) that
\[
\frac{1}{k_{w}(z)}=1-\overset{\infty}{\underset{n=1}{\sum}}\,c_{n}(z\overline{w})^{n},\, c_{n}>0,
\mbox { for all } n.
\]
 Hence, Dirichlet space has a reproducing kernel
with {}``one positive square'' or a {}``complete Nevanlinna-Pick''
kernel. This property will be used to complete the first part of our proof.

\medskip
 An important relationship between the multipliers and reproducing
kernels is that for $\phi\in\mathcal{M}(\mathcal{D})$ \, and  ${z\in \mathbb{D}}$,
\[
M_{\phi}^{\star}k_{z}=\overline{\phi(z)}\; k_{z}.
\]
This automatically
implies that $\Vert\phi\Vert_{\infty}\leq\Vert M_{\phi}\Vert,$
so $\mathcal{M}(\mathcal{D})\subseteq H^{\infty}(\mathbb{D}).$

Similarly, if $\phi_{ij}\in\mathcal{M}(\mathcal{D})$ and $M_{[\phi_{ij}]_{j=1}^{\infty}}\in B\,(\overset{\infty}{\underset{1}{\oplus}}\, \mathcal{D}),$
then for $\underline{x}\in l^{2}$ and $z\in \mathbb{D},$ we have
\[
M_{[\phi_{ij}]}^{\star}\left(\underline{x}\, k_{z}\right)=[\phi_{ij}(z)]^{\star}\;\left(\underline{x}\, k_{z}\right).
\]
Again, it follows that
\[
\underset{z\in D}{sup}\,\Vert\,[\phi_{ij}(z)]\,\Vert_{B(l^{2})}\leq\Vert M_{[\phi_{ij}]}\Vert_{B\,(\overset{\infty}{\underset{1}{\oplus}}\, \mathcal{D})}
\]
and so
\[
\mathcal{M}(\overset{\infty}{\underset{1}{\oplus}}\, \mathcal{D})\subseteq {H_{B(l^{2})}^{\infty}(\mathbb{D}).}
\]

 It is clear that $\mathcal{M}(H^{2}(\mathbb{D}))=H^{\infty}(\mathbb{D})$ but $\mathcal{M}(\mathcal{D})\varsubsetneq H^{\infty}(\mathbb{D})$
(e.g., $\overset{\infty}{\underset{n=1}{\sum}}\frac{z^{n^{3}}}{n^{2}}$
is in $H^{\infty}(\mathbb{D})$ but is not in $\mathcal{D}$ and so not
in $\mathcal{M}(\mathcal{D})$).
Hence, $\mathcal{M}(\mathcal{D})\subsetneq H^{\infty}(\mathbb{D})\,\cap\,\mathcal{D}$.

\medskip
 Also, it is worthwhile to note that the pointwise hypothesis that

\medskip \noindent
$F(z)\, F(z)^{\star}$ $\leq 1$
for $z\in \mathbb{D}$, implies that the analytic Toeplitz operators $T_{F}^{R}$
and $T_{F}^{C}$ defined on $\overset{\infty}{\underset{1}{\oplus}}\, H^{2}(\mathbb{D})$
and $H^{2}(\mathbb{D})$ in analogy to that of $M_{F}^{R}$ and $M_{F}^{C}$
are bounded and
\[
 \left\Vert T_{F}^{R}\right\Vert =\left\Vert T_{F}^{C}\right\Vert =\underset{z\in \mathbb{D}}{sup}\left(\overset{\infty}{\underset{j=1}{\sum}}\,\vert f_{j}(z)\vert^{2}\right)^{\frac{1}{2}}\le 1.
\]

But, since $M(\mathcal{D})\varsubsetneq H^{\infty}(\mathbb{D})$, the pointwise
upperbound hypothesis will not be sufficient to conclude that $M_{F}^{R}$
and $M_{F}^{C}$ are bounded on Dirichlet space. However,  $\left\Vert M_{F}^{R}\right\Vert \leq\sqrt{18}\left\Vert M_{F}^{C}\right\Vert$ from [Tr2].
 Thus, we will replace the natural normalization that $F(z)\,F(z)^{\star}\leq 1$
for all $z\in \mathbb{D}$, by the stronger condition that $\left\Vert M_{F}^{C}\right\Vert \leq 1.$

\medskip
Then we have the following theorem:
\begin{theorem}
Let $H$, $\{f_j\}_{j=1}^{\infty} \subset \mathcal{M}(\mathcal{D})$.  Assume that
\begin{align*}
& (\mathrm{a}) \; \Vert M_F^C \Vert \le 1 \\
\text{and } \; \; & (\mathrm{b}) \; |H(z)| \le \sqrt{\sum_{j=1}^{\infty} \, |f_j(z)|^2} \; \text{ for all } z \in \mathbb{D}.
\end{align*}

Then there exist $\{g_j\}_{j=1}^{\infty} \subset \mathcal{M}(\mathcal{D})$ with
\begin{align*}
& \; \Vert M_G^C \Vert < \infty\\
\text{and } \;\; &  \; F \, G^T = H^3.
\end{align*}
\end{theorem}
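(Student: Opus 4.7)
The plan is to combine an $\epsilon$-regularization with the Toeplitz corona theorem for $\mathcal{M}(\mathcal{D})$ of~[Tr2], then exploit the complete Nevanlinna--Pick character of the Dirichlet kernel to translate the resulting pointwise identities into multiplier-norm estimates. Each of the three powers of $H$ plays a distinct role: one power covers the gap between the pointwise hypothesis~(b) and a genuine operator inequality on $\mathcal{D}$, one power absorbs the $\epsilon$-regularization error, and one power supplies the slack needed to land in $\mathcal{M}(\mathcal{D})$ rather than merely in $\mathcal{D}$.

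First I would regularize. For $\epsilon > 0$, adjoin a constant generator $\epsilon$, forming $F_\epsilon = (\epsilon, f_1, f_2, \dots)$, so that $|F_\epsilon(z)|^2 \ge \epsilon^2$ on $\mathbb{D}$ and $\|M_{F_\epsilon}^C\|^2 \le 1 + \epsilon^2$. The corona theorem from~[Tr2] yields $\{\widetilde g_j^{\,\epsilon}\}_{j=0}^{\infty} \subset \mathcal{M}(\mathcal{D})$ with $F_\epsilon \widetilde G_\epsilon^T \equiv 1$. Multiplying by $H^3$ gives the starting identity
\[
H^3 \;=\; \epsilon\, H^3\, \widetilde g_0^{\,\epsilon} \;+\; \sum_{j=1}^{\infty} f_j \bigl( H^3\, \widetilde g_j^{\,\epsilon} \bigr),
\]
and the task becomes to absorb the error term $\epsilon H^3 \widetilde g_0^{\,\epsilon}$ into the ideal generated by $\{f_j\}_{j=1}^{\infty}$, producing corrected column multipliers $G_\epsilon$ with $F G_\epsilon^T = H^3$ and $\|M_{G_\epsilon}^C\|$ uniformly bounded in $\epsilon$; a weak-$*$ compactness argument in $\mathcal{M}(\mathcal{D})$ then yields the required $\{g_j\}$ in the limit.

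For the absorption step I would use the pointwise inequality $|H(z)|^2 \le |F_\epsilon(z)|^2$ together with the complete Nevanlinna--Pick property of $k_w$ to produce, via Leech's theorem, a column multiplier $\Phi_\epsilon = (\phi_0^{\,\epsilon}, \phi_1^{\,\epsilon}, \dots)$ satisfying $F_\epsilon \Phi_\epsilon^T = H$. Substituting $H = \epsilon\phi_0^{\,\epsilon} + \sum_{j \ge 1} f_j \phi_j^{\,\epsilon}$ three times and collecting terms yields
\[
H^3 \;=\; (\epsilon\phi_0^{\,\epsilon})^3 \;+\; \sum_{j=1}^{\infty} f_j \Bigl[ \bigl(H^2 + \epsilon H \phi_0^{\,\epsilon} + \epsilon^2 (\phi_0^{\,\epsilon})^2\bigr)\, \phi_j^{\,\epsilon} \Bigr],
\]
so that the residual $(\epsilon\phi_0^{\,\epsilon})^3$ tends to $0$ as $\epsilon \to 0$, while the coefficients of $f_j$ remain in $\mathcal{M}(\mathcal{D})$ with norms controlled by $\|H\|_{\mathcal{M}(\mathcal{D})}^{\,2}\, \|M_{\Phi_\epsilon}^C\|$. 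The three factors of $H$ are exactly what this bookkeeping demands: one factor is spent each time $H$ is exchanged for $F_\epsilon \Phi_\epsilon^T$, and two of them survive in the coefficients to guarantee multiplier membership.

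The main obstacle will be the uniform multiplier estimate on $\Phi_\epsilon$ as $\epsilon \to 0$, since the pointwise inequality $|H(z)|^2 \le |F_\epsilon(z)|^2$ does not in general imply the operator inequality $M_H M_H^* \le M_{F_\epsilon}^R M_{F_\epsilon}^{R*}$ that Leech's theorem requires --- this is precisely the obstruction responsible for $H$ and $H^2$ failing in the classical Wolff setting. The heart of the argument is therefore an operator-level lemma asserting that, under hypotheses~(a) and~(b), the formal column operator associated to $H\, \overline{f_j}/(|F|^2 + \epsilon^2)$ extends to a bounded multiplier from $\mathcal{D}$ into $\bigoplus_{1}^{\infty}\mathcal{D}$ with bound independent of $\epsilon$. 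The CNP property of $k_w$ is what makes this plausible in our setting: it converts kernel-level positivity into multiplier-norm estimates, a bridge unavailable in the classical $H^\infty$ setting, which is why Wolff's original proof must instead rely on $\bar\partial$-techniques and Carleson measures.
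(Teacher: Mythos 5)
Your approach departs substantially from the paper's proof, and in a place where the departure creates a genuine gap. The paper's strategy is to establish the operator inequality
\[
M_{H^3}\,M_{H^3}^{\star}\;\leq\;K^{2}\,M_{F}^{R}\,M_{F}^{\star R},
\]
by constructing, for each polynomial $h$, an explicit analytic solution
\[
\underline{u}_{h}\;=\;\frac{F^{\star}H^{3}h}{FF^{\star}}\;-\;Q\,\widehat{\Bigl(\frac{Q^{\star}F'^{\star}H^{3}h}{(FF^{\star})^{2}}\Bigr)}
\]
of $M_{F}^{R}\underline{u}_{h}=H^{3}h$ and then estimating $\Vert\underline{u}_{h}\Vert_{\mathcal{D}}$ directly, using the algebraic factorization $FF^{\star}I-F^{\star}F=QQ^{\star}$, a Cauchy transform, and the singular integral estimates of Lemmas~2 and~3. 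The cubic power of $H$ enters \emph{inside} this $\bar\partial$-machinery: it is exactly what makes the pieces $(a')$--$(e')$ (terms such as $\frac{H}{\sqrt{FF^{\star}}}$, $\frac{H^{2}}{FF^{\star}}$, and $\frac{H^{3}}{(FF^{\star})^{2}}$) uniformly bounded. Only after this hard-analysis step is the CNP/commutant-lifting step invoked, to convert the operator inequality into a multiplier factorization $FG^{T}=H^{3}$.

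The gap in your proposal is the ``heart of the argument'' you name but do not prove: a uniform-in-$\epsilon$ multiplier bound for a column $\Phi_{\epsilon}$ with $F_{\epsilon}\Phi_{\epsilon}^{T}=H$. Notice that this lemma, if true, already proves more than Theorem~1. If $\Vert M_{\Phi_{\epsilon}}^{C}\Vert$ were bounded independently of $\epsilon$, then each entry $\phi_{0}^{\epsilon}$ would be bounded in $\mathcal{M}(\mathcal{D})$, so $\epsilon\phi_{0}^{\epsilon}\to 0$, and a weak-$\star$ limit of $(\phi_{1}^{\epsilon},\phi_{2}^{\epsilon},\dots)$ would give a bounded column multiplier $\Phi$ with $F\Phi^{T}=H$. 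That is, $H$ itself --- not $H^{3}$ --- would lie in the ideal. This contradicts the Rao/Treil phenomenon that motivates Wolff's theorem in the first place; it is precisely because the pointwise hypothesis~(b) does not yield a uniformly bounded solution for $H$ that a higher power is needed. Consequently your cubing step is vacuous within your own framework: once you have the uniform bound on $\Phi_{\epsilon}$ there is nothing left for the extra two powers of $H$ to do, and when you lack that bound, the cubing does not supply it.

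There is also a conceptual misreading of what the CNP property buys. The complete Nevanlinna--Pick kernel is used in the paper (and in~[Tr2]) at the \emph{lifting} stage: it converts the already-established operator inequality $M_{H^3}M_{H^3}^{\star}\le K^{2}M_{F}^{R}M_{F}^{\star R}$ into a multiplier factorization (Leech's theorem). It does not, and cannot, convert the pointwise inequality~(b) into that operator inequality; that is the step requiring Wolff-style $\bar\partial$-estimates, which are present in the paper's proof of Theorem~1 exactly as they are in the $H^\infty$ case. A workable version of your plan would have to supply, in place of the unproven uniform bound on $\Phi_\epsilon$, precisely the kind of Cauchy-transform and Schur-test estimates that the paper carries out.
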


\medskip
Of course, it should be noted that for only a finite number of multipliers, $\{f_j\}$, condition (a) of Theorem 1 can always be assumed, so we have the exact analogue of Wolff's theorem in the finite case.

\medskip
First, let's outline the method of our proof. Assume that $F\in\mathcal{M}_{l^{2}}(\mathcal{D})$
and $H\in\mathcal{M}(\mathcal{D})$ satisfy the hypotheses (a) and (b) of Theorem 1.
Then we show that there exists a constant
$K<\infty,$ so that
\begin{equation}
M_{H^3}\, M_{H^3}^{\star}\leq K^{2}M_{F}^{R}\, M_{F}^{\star R}.
\end{equation}

Given $(2)$, a commutant lifting theorem argument as it appears in,
for example, Trent {[}Tr2{]}, completes the proof by providing
a $G\in \mathcal{M}_{l^{2}}(\mathcal{D})$, so that $\Vert M_{G}^{C}\Vert\leq K$ and $F\, G^{T}=H^3$.\\

 But (2) is equivalent to the following: there exists a constant
$K<\infty$ so that, for any $h\in\mathcal{D}$, there exists \,$\underline{u}_{h}\in\overset{\infty}{\underset{1}{\oplus}}\, \mathcal{D}$\,
such that
\begin{align}
(\mathrm{i})&\,\,\, M_{F}^{R}(\underline{u}_{h})=  H^3h \quad \text{ and}\notag \\
(\mathrm{ii}) &\,\,\left\Vert \underline{u}_{h}\right\Vert _{\mathcal{D}}\leq K\left\Vert h\right\Vert _{\mathcal{D}}.
\end{align}

 \medskip
 Hence, our goal is to show that (3) follows from (a) and (b).
For this we need a series of lemmas.

\begin{lemma}
 Let $\left\{c_{j}\right\}_{j=1}^{\infty}\in l^{2}$ and $C=\left(c_{1},c_{2},...\right)\in B\left(l^{2},\mathbb{C}\right).$
Then there exists $Q$ such that the entries of $Q$ are either $0$ or
$\pm c_{j}$ for some $j$ and $CC^{\star}I-C^{\star}C=QQ^{\star}.$
Also, range of $Q$ $=$ kernel of $C.$
\end{lemma}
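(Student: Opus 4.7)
The plan is to produce $Q$ by an explicit antisymmetric construction keyed to pairs of indices, then verify both assertions by direct entry-wise calculation.

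First I would unpack the target matrix. Writing $Cx=\sum_j c_j x_j$, the adjoint acts by $C^*\lambda=\lambda\,\overline{c}$, so $CC^*=\sum_j|c_j|^2=\|c\|^2_{l^2}$ is a scalar and $(C^*C)_{ij}=\overline{c_i}\,c_j$. Hence the $(i,j)$ entry of $CC^*I-C^*C$ is $\|c\|^2\delta_{ij}-\overline{c_i}c_j$, and the whole problem is to realize this matrix as $QQ^*$ with entries drawn from $\{0,\pm c_j\}$.

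Next I would let $Q\colon H\to l^2$, where $H$ is the $l^2$ space indexed by ordered pairs $(j,k)$ with $j<k$, be defined by
\[
Q_{i,(j,k)} \;=\; \begin{cases} c_k & \text{if } i=j, \\ -c_j & \text{if } i=k, \\ 0 & \text{otherwise,}\end{cases}
\]
so that the column indexed by $(j,k)$ is $c_k e_j - c_j e_k$. A direct calculation then confirms the factorization: the $(i,i)$ entry of $QQ^*$ is $\sum_{k\neq i}|c_k|^2=\|c\|^2-|c_i|^2$, and for $i_1<i_2$ the $(i_1,i_2)$ entry receives its single non-zero contribution from the column $(i_1,i_2)$, yielding $c_{i_2}\cdot\overline{(-c_{i_1})}=-\overline{c_{i_1}}c_{i_2}$. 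This matches $CC^*I-C^*C$ entry by entry, and all entries of $Q$ are of the prescribed form.

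For the range assertion I would first observe that each column of $Q$ already lies in $\ker C$: $C(c_k e_j - c_j e_k)=c_j c_k-c_j c_k=0$, so $\mathrm{range}(Q)\subseteq\ker C$. For the reverse inclusion I would use the factorization itself: $C^*C=\|c\|^2 P_{\mathrm{span}(\overline{c})}$ is rank-one, and $\mathrm{span}(\overline{c})=\mathrm{range}(C^*)=(\ker C)^\perp$, so
\[
QQ^* \;=\; \|c\|^2 I - C^*C \;=\; \|c\|^2\,P_{\ker C}.
\]
In particular $\mathrm{range}(QQ^*)=\ker C$, and combined with $\mathrm{range}(QQ^*)\subseteq\mathrm{range}(Q)\subseteq\ker C$ this forces equality throughout (the case $c=0$ being trivial). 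The only genuine step is guessing the antisymmetric $\pm c_j$ column pattern indexed by unordered pairs; after that, the verification is mechanical, and boundedness of $Q$ comes free from the factorization since $QQ^*$ is a bounded positive operator.
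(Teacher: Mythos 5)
The paper does not prove this lemma itself; it simply defers to Trent [Tr2] for a more general version. Your construction --- columns of $Q$ indexed by pairs $j<k$, each equal to $c_k e_j - c_j e_k$ --- is exactly the standard one from that reference, and your entrywise verification of $QQ^\star = \|c\|^2 I - C^\star C$ together with the range identification $QQ^\star = \|c\|^2 P_{\ker C}$ is correct. One small point worth tightening: the phrase ``boundedness of $Q$ comes free from the factorization'' presupposes that $QQ^\star$ already makes sense as an operator product, which is what is at issue. The clean route is to note that the rows of $Q$ lie in $\ell^2$ of the pair index set with norm $\sqrt{\|c\|^2-|c_i|^2}\le\|c\|$, so $Q^\star$ is a well-defined matrix whose Gram matrix is the formal product $QQ^\star = \|c\|^2 I - C^\star C$; since that Gram matrix is a bounded operator, $Q^\star$ (hence $Q$) is bounded, indeed $\|Q\|\le\|c\|$.
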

 We will apply this lemma in
our case with $C=F(z)$ for each $z\in \mathbb{D}$, when $F(z)\neq 0$. A proof of a more general version can be found in Trent {[}Tr2{]}.

\medskip
Given condition
(b) of Theorem 1 for all $z\in \mathbb{D}$, $F\in\mathcal{M}_{l^{2}}(\mathcal{D})$
and $H\in\mathcal{M}(\mathcal{D})$ with $H$ being not identically
zero, we lose no generality assuming that $H(0)\neq 0.$ If $H(0)=0,$
but $H(a)\neq 0,$ let $\beta(z)=\frac{a-z}{1-\bar{a}\, z}$ for $z\in \mathbb{D}$.
 Then since (b) holds for all $z\in \mathbb{D}$, it holds for $\beta(z)$.
So we may replace $H$ and $F$ by $Ho\beta$ and $Fo\beta,$ respectively.
If we prove our theorem for $Ho\beta$ and $Fo\beta$, then there
exists $G\in\mathcal{M}_{l^{2}}(\mathcal{D}$) so that $\left(Fo\beta\right)G=Ho\beta$
and hence $F(Go\beta^{-1})=H\,$ and $Go\beta^{-1}\in\mathcal{M}_{l^{2}}(\mathcal{D}),$
so we were done. Thus, we may assume that $H(0)\neq0$ in (b), so
$\Vert F(0)\Vert_{2}\neq 0.$  This normalization will let us apply some relevant lemmas from [Tr1].

\medskip
It suffices to establish (i) and (ii) for any dense set of
functions in $\mathcal{D}$, so we will use polynomials. First, we will assume $F$ and $H$ are analytic
on $\mathbb{D}_{1+\epsilon}(0)$. In this case, we write the most general solution of the pointwise problem on $\bar{\mathbb{D}}$ and find an analytic solution with uniform bounds. Then we remove the smoothness hypotheses on $F$ and $H$.

\medskip
For a polynomial, $h$, we take
\[
\underline{u}_{h}(z)=F(z)^{\star}\left(F(z)F(z)^{\star}\right)^{-1}H^3\, h - Q(z)\underline{k}(z),\mbox{ where }\underline{k}(z)\in l^{2}\mbox{ for }z\in \bar{\mathbb{D}}.
\]
 We have to find $\underline{k}(z)$ so that $\underline{u}_{h}\in\overset{\infty}{\underset{1}{\oplus}}\, \mathcal{D}$.
Thus we want $\bar{\partial_{z}}\,\underline{u}_{h}=0$ in $\mathbb{D}.$

\medskip
Therefore, we will try
\[
\underline{u}_{h}=\frac{F^{\star}H^3 h}{FF^{\star}}-Q\,\,\widehat{\left(\frac{Q^{\star}F^{'\star}H^3h}{\left(FF^{\star}\right)^{2}}\right)},
\]
where $\widehat{k}$ is the Cauchy transform of $k$ on $\mathbb{D}$. Note that
for $k$ smooth on $\bar{\mathbb{D}}$ and $z\in \mathbb{D}$,
\[
\underline{\widehat{k}}(z)=-\frac{1}{\pi}\int_{D}\frac{\underline{k}(w)}{w-z}\, dA(w) \, \text{ and }
\overline{\partial} \, \underline{\widehat{k}}(z) = k(z) \, \text{ for } z \in \mathbb{D}.
\]
See [A] for background on the Cauchy transform.

Then it's clear that $M_{F}^{R}\left(\underline{u}_{h}\right)=H^3h$
and $\underline{u}_{h}$ is analytic. Hence, we will be done in the
smooth case if we are able to find $K<\infty$, independent of the polynomial, $h$, and $\epsilon >0$,
such that
\begin{equation}
\left\Vert \underline{u}_{h}\right\Vert_{\mathcal{D}}\leq K\left\Vert h\right\Vert_{\mathcal{D}}
\end{equation}

\begin{lemma}
Let $\underline{w}$ be a harmonic function on $\overline{\mathbb{D}}$,
then
\[
\int_{D}\Vert Q'\underline{w}\Vert_{l^{2}}^{2}\, dA \leq 8 \,\Vert\underline{w}\Vert_{\mathcal{HD}}^{2}.
\]
\end{lemma}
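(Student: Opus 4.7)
The plan is to reduce the matrix inequality to a Carleson-type estimate for the measure $|F'|^{2}\,dA$ against harmonic Dirichlet space. Since Lemma 1 was applied with $C = F(z)$, the entries of $Q(z)$ lie in $\{0,\pm f_j(z)\}$ in a pattern fixed as $z$ varies, so entrywise differentiation produces $Q'(z)$ with entries in $\{0,\pm f_j'(z)\}$ in the \emph{same} pattern. Because the identity $QQ^{\star} = |C|^{2}I - C^{\star}C$ of Lemma 1 is algebraic in the $c_j$, it specializes with $f_j \mapsto f_j'$ to give
\[
Q'(z)\,Q'(z)^{\star} \;=\; |F'(z)|^{2}\,I - F'(z)^{\star}F'(z) \;\leq\; |F'(z)|^{2}\,I,
\]
where $|F'(z)|^{2} = \sum_{j}|f_j'(z)|^{2}$. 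Hence pointwise $\|Q'(z)\,\underline{w}(z)\|_{l^{2}} \leq |F'(z)|\,\|\underline{w}(z)\|_{l^{2}}$, and the lemma reduces to
\[
\int_{D}|F'(z)|^{2}\,\|\underline{w}(z)\|_{l^{2}}^{2}\,dA(z) \;\leq\; 8\,\|\underline{w}\|_{\mathcal{HD}}^{2}.
\]

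Next I would establish the scalar Carleson inequality $\int_{D}|F'|^{2}|h|^{2}\,dA \leq 4\|h\|_{\mathcal{D}}^{2}$ for any polynomial $h$. Using the norm $\|g\|_{\mathcal{D}}^{2} = \int_{\partial D}|g|^{2}\,d\sigma + \int_{D}|g'|^{2}\,dA$, the product rule $(f_j h)' = f_j' h + f_j h'$, and the elementary bound $|a+b|^{2} \geq \tfrac{1}{2}|a|^{2} - |b|^{2}$, summing in $j$ yields
\[
\sum_{j}\|f_j h\|_{\mathcal{D}}^{2} \;\geq\; \tfrac{1}{2}\int_{D}|F'|^{2}|h|^{2}\,dA - \int_{D}|F|^{2}|h'|^{2}\,dA.
\]
Hypothesis (a) bounds the left side by $\|h\|_{\mathcal{D}}^{2}$, while $\|F\|_{\infty} \leq \|M_{F}^{C}\| \leq 1$ bounds the error term by $\int_{D}|h'|^{2}\,dA \leq \|h\|_{\mathcal{D}}^{2}$, giving the desired constant $4$.

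To finish, decompose the $l^{2}$-valued harmonic function $\underline{w} = \underline{u} + \overline{\underline{v}}$ with $\underline{u},\underline{v}$ holomorphic $l^{2}$-valued and $\underline{v}(0) = 0$. A coordinatewise Fourier expansion gives the exact equality $\|\underline{w}\|_{\mathcal{HD}}^{2} = \|\underline{u}\|_{\mathcal{D}}^{2} + \|\underline{v}\|_{\mathcal{D}}^{2}$, while $\|\underline{w}\|_{l^{2}}^{2} \leq 2\|\underline{u}\|_{l^{2}}^{2} + 2\|\underline{v}\|_{l^{2}}^{2}$. Applying the scalar Carleson estimate to each coordinate of $\underline{u}$ and $\underline{v}$ and summing,
\[
\int_{D}|F'|^{2}\,\|\underline{w}\|_{l^{2}}^{2}\,dA \;\leq\; 8\bigl(\|\underline{u}\|_{\mathcal{D}}^{2} + \|\underline{v}\|_{\mathcal{D}}^{2}\bigr) \;=\; 8\,\|\underline{w}\|_{\mathcal{HD}}^{2},
\]
which closes the argument. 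The main obstacle is extracting the sharp constant $4$ in the Carleson step: the trick is to apply $|a+b|^{2} \geq \tfrac{1}{2}|a|^{2} - |b|^{2}$ inside each summand \emph{before} summing in $j$, so that the negative term aggregates to the tractable quantity $|F|^{2}|h'|^{2}$, which is absorbed using $\|F\|_{\infty}\leq 1$.
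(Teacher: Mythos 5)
Your proof is correct and follows essentially the same route as the paper: bound the operator norm of $Q'(z)$ pointwise by $|F'(z)|=\bigl(\sum_j|f_j'(z)|^2\bigr)^{1/2}$ using the algebraic structure of $Q$ from Lemma 1, establish a Carleson-type estimate for $|F'|^2\,dA$ via the product rule on $(f_jh)'$ together with $\Vert M_F^C\Vert\leq 1$ and $\Vert F\Vert_\infty\leq 1$, and split $\underline{w}$ into analytic and co-analytic parts. The only difference is organizational — you extract the scalar Carleson estimate as a stand-alone step and apply it coordinatewise, whereas the paper performs the same computation inline on the vector-valued analytic and co-analytic pieces directly — and you recover the same constant $8$.
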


\begin{proof}
Let $\underline{w}$ be a vector-valued harmonic function
on $\overline{\mathbb{D}}$. Write $\underline{w}=\underline{x}+\underline{\bar{y}}$,
where $\underline{x}$ and $\underline{\bar{y}}$ are respectively
the analytic and co-analytic parts of $\underline{w}$.

\medskip
We have
\begin{align*}
\int_{D}\Vert Q'\underline{w}\Vert_{l^{2}}^{2}\, dA & =\int_{D}\Vert Q'\underline{x}+Q'\underline{\bar{y}}\Vert_{l^{2}}^{2}\,dA\\
& \leq2\,\int_{D}\Vert Q'\underline{x}\Vert_{l^{2}}^{2}\,dA+2\,\int_{D}\Vert Q'\underline{\bar{y}}\Vert_{l^{2}}^{2}\,dA.
\end{align*}
 Now
 \begin{align*}
 \int_{D}\Vert Q'\underline{x}\Vert_{l^{2}}^{2}\,dA & =\int_{D}<Q'^{\star}Q'\underline{x}\,,\,\underline{x}>_{l^{2}}dA\\
 & \leq\int_{D}<F'F'^{\star}\underline{x},\,\underline{x}>_{l^{2}}dA\\
 & \leq\int_{D}\overset{\infty}{\underset{j=1}{\sum}}\,\overset{\infty}{\underset{k=1}{\sum}}\vert\bar{f_{j}'}x_{k}\vert^{2}\,dA \\
& \leq2\,\overset{\infty}{\underset{j=1}{\sum}}\,\overset{\infty}{\underset{k=1}{\sum}}\int_{D}\vert(f_{j}x_{k})'\vert^{2}\, dA+2\,\overset{\infty}{\underset{j=1}{\sum}}\overset{\infty}{\underset{k=1}{\sum}}\int_{D}\vert f_{j}x_{k}'\vert^{2}\,dA\\
& \leq2\,\overset{\infty}{\underset{j=1}{\sum}}\,\overset{\infty}{\underset{k=1}{\sum}}\Vert M_{f_{j}}x_{k}\Vert_{\mathcal{D}}^{2}+2\,\overset{\infty}{\underset{k=1}{\sum}}\Vert x_{k}\Vert_{\mathcal{D}}^{2}\,dA\\
& \leq2\,\overset{\infty}{\underset{k=1}{\sum}}\Vert M_{F}^{C}\Vert_{\mathcal{D}}^{2}\,\Vert x_{k}\Vert_{\mathcal{D}}^{2}+2\,\overset{\infty}{\underset{k=1}{\sum}}\Vert x_{k}\Vert_{\mathcal{D}}^{2}\\
& =4\,\Vert\underline{x}\Vert_{\mathcal{D}}^{2}.
\end{align*}
 Similarly, we can show that $\int_{D}\Vert Q'\underline{\bar{y}}\Vert_{l^{2}}^{2}\,dA\leq4\,\Vert\underline{y}\Vert_{\mathcal{D}}^{2}$.

 \medskip
 Thus,
 \begin{align*}
 \int_{D}\Vert Q'\underline{w}\Vert_{l^{2}}^{2}\, dA & \leq8\,\Vert\underline{x}\Vert_{\mathcal{D}}^{2}+8\,\Vert\underline{y}\Vert_{\mathcal{D}}^{2}\\
 & =8\,\Vert\underline{x}+\underline{\bar{y}}\Vert_{\mathcal{H}\mathcal{D}}^{2}\\
 & =8\,\Vert\underline{w}\Vert_{\mathcal{HD}}^{2}.
 \end{align*}
 \vskip-1em
 \end{proof}

\begin{lemma}
 Let the operator $T$ be defined on $L^2(\mathbb {D},dA)$ by
\[
  (Tf)(\lambda)=\int_{D}\frac{f(z)}{\left(z-\lambda\right)\left(1-z\,\bar{\lambda}\right)}\,dA(z),
\]
for $\lambda\in \mathbb{D}$ and $f \in L^2(\mathbb {D},dA)$.
Then
\[
 \Vert Tf\Vert_{A}^{2}\leq100\,\pi^{2}\,\Vert f\Vert_{A}^{2}.
 \]
\end{lemma}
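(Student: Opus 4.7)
The plan is to exploit the rotational symmetry $K(e^{i\alpha}\lambda, e^{i\alpha}z) = e^{-i\alpha}K(\lambda,z)$ of the kernel $K(\lambda,z) = \frac{1}{(z-\lambda)(1-z\bar\lambda)}$. Writing $f(re^{i\theta}) = \sum_{m\in\mathbb{Z}}f_{m}(r)e^{im\theta}$, so that $\Vert f\Vert_{A}^{2} = 2\pi\sum_{m}\Vert f_{m}\Vert^{2}_{L^{2}([0,1],r\,dr)}$, a direct change of variable $\eta = \theta - \phi$ yields
\[
(Tf)(\rho e^{i\phi}) \;=\; \sum_{m\in\mathbb{Z}}e^{i(m-1)\phi}\,(\tilde T_{m}f_{m})(\rho),
\]
where $\tilde T_{m}$ is the integral operator on $L^{2}([0,1], r\,dr)$ with radial kernel
\[
K_{m}(r,\rho) \;=\; \int_{0}^{2\pi}\frac{e^{im\eta}}{(re^{i\eta}-\rho)(1-r\rho e^{i\eta})}\, d\eta.
\]
By orthogonality of $\{e^{in\phi}\}$, $\Vert Tf\Vert_{A}^{2}\le (\sup_{m}\Vert\tilde T_{m}\Vert)^{2}\,\Vert f\Vert_{A}^{2}$, so it suffices to bound each $\Vert\tilde T_{m}\Vert$ by $10\pi$ uniformly in $m$.

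Next I would compute $K_{m}$ by setting $u=e^{i\eta}$ and applying the residue theorem on the unit circle. The integrand $\frac{u^{m-1}}{(ru-\rho)(1-r\rho u)}$ has a pole at $u=\rho/r$ (inside the unit circle precisely when $\rho<r$), a pole at $u=1/(r\rho)$ always lying outside, and, for $m\le 0$, a pole of order $1-m$ at the origin. For $m\ge 1$ only the interior simple pole contributes and
\[
K_{m}(r,\rho) \;=\; \frac{2\pi\rho^{m-1}}{r^{m}(1-\rho^{2})}\chi_{\{\rho<r\}},
\]
while for $m=0$ and $m\le -1$ the residue at the origin adds polynomial terms in $\rho$ and $1/\rho$ carrying a factor $r^{|m|}$.

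The core estimate is a weighted Hardy inequality of Muckenhoupt type. For $m\ge 1$, the substitution $g(r)=r^{1-m}f(r)$ turns $\tilde T_{m}f$ into $\frac{2\pi\rho^{m-1}}{1-\rho^{2}}\int_{\rho}^{1}g(r)\,dr$, and the Muckenhoupt functional
\[
\sup_{t}\,\Bigl(\int_{0}^{t}\tfrac{\rho^{2m-1}}{(1-\rho^{2})^{2}}\,d\rho\Bigr)\Bigl(\int_{t}^{1} r^{1-2m}\,dr\Bigr)
\]
reduces, after $s=t^{2}$, to $\tfrac14\sup_{s}(\int_{0}^{s}\tfrac{u^{m-1}}{(1-u)^{2}}du)(\int_{s}^{1}v^{-m}dv)$; an integration by parts bounds this supremum by $\tfrac14$, giving $\Vert\tilde T_{m}\Vert\le 2\pi$. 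For $m=0$ and $m\le -1$, the kernel splits into a $\{\rho<r\}$ piece treated exactly as above and a $\{\rho>r\}$ piece, which is a forward Hardy operator whose Muckenhoupt constant is again bounded independently of $m$ thanks to the compensating factor $r^{|m|}$.

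The main obstacle is the bookkeeping for $m\le 0$: the residue at the origin produces terms whose absolute size grows with $|m|$ before the Muckenhoupt reduction, and one must verify that the $r^{|m|}$ factor in the kernel supplies enough decay to keep all Muckenhoupt quantities uniformly bounded rather than proliferating with $|m|$. Once the uniform bound $\sup_{m}\Vert\tilde T_{m}\Vert\le 10\pi$ is established, the conclusion $\Vert Tf\Vert_{A}^{2}\le 100\pi^{2}\Vert f\Vert_{A}^{2}$ is immediate.
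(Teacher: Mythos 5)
Your decomposition is exactly the paper's: both use the rotational covariance $K(e^{i\alpha}\lambda,e^{i\alpha}z)=e^{-i\alpha}K(\lambda,z)$ (``Zygmund's method of rotations'') to reduce $T$ to a direct sum of one--dimensional operators $\tilde T_m$ acting on the radial Fourier coefficients, so that $\Vert Tf\Vert_A^2\le(\sup_m\Vert\tilde T_m\Vert)^2\Vert f\Vert_A^2$. Where you differ is in how the $\tilde T_m$ are handled. You compute the kernels $K_m(r,\rho)$ by residues, whereas the paper expands $1/(1-z\bar\lambda)$ and $1/(z-\lambda)$ in geometric series and integrates term by term; these give the same kernels (I checked your $m\ge1$ formula and the $m=0,-1$ residues against the paper's $T_l$), so this is a cosmetic difference. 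The genuine divergence is the core estimate: you invoke the Muckenhoupt criterion for weighted Hardy inequalities, while the paper applies Schur's test with the explicit weights $p(u)=1$ and $p(u)=1/\sqrt{1-u^2}$. Both are legitimate and comparable in difficulty; Muckenhoupt is tailored to one-sided Hardy operators and gives near-sharp constants, while Schur's test is more elementary but requires guessing a test function. For $m\ge1$ your computation is correct: after the substitution $s=t^2$ the two-sided integral product is $\tfrac14\,a(s)b(s)$ with $a(s)=\int_0^s u^{m-1}(1-u)^{-2}du\le s^{m-1}/(1-s)$ and $b(s)=\int_s^1 v^{-m}dv$, whence $a(s)b(s)\le(1-s^{m-1})/((m-1)(1-s))\le1$, giving $\Vert\tilde T_m\Vert\le 2\pi$.

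However, you explicitly leave the $m\le0$ case as an ``obstacle,'' and this is precisely where the paper's largest constant ($C_l\le 5$) comes from, so the proof as written is not complete. The concern you raise is real but resolvable: the residue at the origin contributes $-r^{|m|}\sum_{j=0}^{|m|}\rho^{|m|-2j-1}$, and after combining with the $\chi_{\{\rho<r\}}$ pole contribution the $\{\rho<r\}$ kernel telescopes to $\tfrac{\rho^{|m|+1}}{1-\rho^2}r^{|m|}$ while the $\{\rho>r\}$ kernel is $-r^{|m|}\sum_{j=0}^{|m|}\rho^{|m|-2j-1}$; the geometric sum is bounded by $\rho^{-|m|-1}/(1-\rho^2)$, and the $r^{|m|}$ in both pieces does supply the needed decay for a uniform Muckenhoupt bound. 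You should carry out this calculation (mirroring the paper's $p(u)=1$ for the first piece and $p(u)=1/\sqrt{1-u^2}$ for the second, or the Muckenhoupt analogue) to verify $\sup_{m\le0}\Vert\tilde T_m\Vert\le10\pi$; until then the uniformity in $m$, which is the only nontrivial point of the lemma, is asserted rather than proved.
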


\begin{proof}
To show that the singular integral operator, $T$, is bounded on $L^2(\mathbb {D},dA)$, we apply Zygmund's method of rotations [Z] and apply Schur's lemma an infinite number of times.

\medskip
Let $f(z)=\overset{\infty}{\underset{j=0}{\sum}}\,\overset{\infty}{\underset{k=0}{\sum}}\,a_{jk}z^{j}\bar{z}^{k}$,
where $a_{ij}=0$ except for a finite number of terms.  For $z = r \, e^{i\theta}$, we relabel, so that

\[
 f(r \,e^{i\theta})
 =\overset{\infty}{\underset{l=-\infty}{\sum}}f_{l}(r)\,e^{il\theta},
\text{ where }\, f_{l}(r)=\overset{\infty}{\underset{k=0}{\sum}}a_{l+k\, k\,}r^{l+2k}.
\]
 Then
 \[
 \Vert f\Vert_{A}^{2}=\overset{\infty}{\underset{l=-\infty}{\sum}}\Vert f_{l}(r)\Vert_{L^{2}\left[0,1\right]},
 \]
where the measure on $L^{2}\left[0,1\right]$ is {}``$rdr$''.

\medskip
 Now
 \begin{align*}
 (Tf)(\lambda)& =\int_{D}\frac{f(z)}{\left(z-\lambda\right)\left(1-z\bar{\lambda}\right)}\,dA(z)\\
& =\overset{\infty}{\underset{n=0}{\sum}}\,\overline{\lambda}^{\, n}\,\int_{D}\,\left[\frac{1}{z-\lambda}\right]z^{n}f(z)\, dA(z)\\
& =\overset{\infty}{\underset{n=0}{\sum}}\,\overline{\lambda}^{\, n}\,\left[\int_{\left|z\right|<\left|\lambda\right|}\frac{1}{z-\lambda}+\int_{\left|\lambda\right|<\left|z\right|}\frac{1}{z-\lambda}\right]z^{n}f(z)\, dA(z)\\
& =\overset{\infty}{\underset{n=0}{\sum}}\overline{\lambda}^{\, n}\left[\frac{1}{-\lambda}\int_{\left|z\right|<\left|\lambda\right|}\overset{\infty}{\underset{p=0}{\sum}}\frac{z^{p}}{\lambda^{p}}+\int_{\left|\lambda\right|<\left|z\right|}
\frac{1}{z}\overset{\infty}{\underset{p=0}{\sum}}\frac{\lambda^{p}}{z^{p}}\right]z^{n}f(z)\,  dA(z)\\
& =\overset{\infty}{\underset{n=0}{\sum}}\,\overset{\infty}{\underset{p=0}{\sum}}(-1)\,\overline{\lambda}^{\, n}\,\frac{1}{\lambda}\int_{\left|z\right|<\left|\lambda\right|}\frac{z^{n+p}}{\lambda^{p}}\left(\overset{\infty}{\underset{l=-\infty}{\sum}}f_{l}(r)e^{il\theta}\right)\, dA(z)\\
& \quad +\overset{\infty}{\underset{n=0}{\sum}}\,\overline{\lambda}^{n}\overset{\infty}{\underset{p=0}{\sum}}\int_{\left|\lambda\right|<\left|z\right|}\,
\frac{\lambda^{p}}{z^{p+1}}z^{n}\left(\overset{\infty}{\underset{l=-\infty}{\sum}}f_{l}(r)e^{il\theta}\right)\,dA(z).
\end{align*}
 Therefore,
 \begin{align*}
 & \left(Tf\right)(se^{it}) = \\
 & \quad \overset{\infty}{\underset{l=-\infty}{\sum}}\overset{\infty}{\underset{n=0}{\sum}}\overset{\infty}{\underset{p=0}{\sum}}(-1)\,s^{n}e^{-int}
 \frac{e^{-it}}{s}\int_{-\pi}^{\pi}\int_{0}^{s}\frac{r^{n+p}e^{i(n+p+l)\theta}}{s^{p}e^{ipt}}\,f_{l}(r) r dr d\theta
 \end{align*}
 \begin{equation*}
 \quad \;\, +\overset{\infty}{\underset{l=-\infty}{\sum}}\overset{\infty}{\underset{n=0}{\sum}}s^{n}e^{-int}\,\overset{\infty}{\underset{p=0}{\sum}}\int_{-\pi}^{\pi}\int_{s}^{1}
\frac{s^{p}e^{ipt}}{r^{p+1}e^{i(p+1)\theta}}r^{n}e^{in\theta}e^{il\theta}f_{l}(r)\, r dr\, d\theta.\tag*{($\star$)}
\end{equation*}

\medskip
 Taking $l=0$ in  $(\star)$, we get that
\begin{align*}
 \left(Tf_{0}\right)(se^{it})& =\overset{\infty}{\underset{n=0}{\sum}}\,\overset{\infty}{\underset{p=0}{\sum}}(-1)s^{n}e^{-int}\,\frac{e^{-it}}{s}\int_{-\pi}^{\pi}\int_{0}^{s}
 \frac{r^{n+p}e^{i(n+p)\theta}}{s^{p}e^{ipt}}\,f_{0}(r)\, r dr\, d\theta\\
& \;\; +\overset{\infty}{\underset{n=0}{\sum}}s^{n}e^{-int}\overset{\infty}{\underset{p=0}{\sum}}\int_{-\pi}^{\pi}\int_{s}^{1}\,\frac{s^{p}e^{ipt}}{r^{p+1}e^{i(p+1)\theta}}r^{n}e^{in\theta}f_{l}\,(r) rdr\,d\theta.
\end{align*}

 \medskip
 Simplifying the above,
 \begin{align*}
 \left(Tf_{0}\right)(se^{it})& =-2\pi\int_{0}^{1}\chi_{\left(0,s\right)}(r)\,\frac{f_{0}(r)\, e^{-it}}{s}\: rdr\\
& \quad +2\pi s\, e^{-it}\overset{\infty}{\underset{n=0}{\sum}}s^{2n}\int_{0}^{1}\chi_{(s,1)}(r)\, f_{0}(r)\, rdr.
\end{align*}
 So
 \[
 \left(Tf_{0}\right)(se^{it})=2\,\pi e^{-it}\left(T_{0}f_{0}\right)(s),
\]
where we define $T_0$ on $L^2([0,1],rdr)$ by
\[
\left(T_{0}f_{0}\right)(s)=-\int_{0}^{1}\chi_{\left(0,s\right)}(r)\left(\frac{r}{s}\right)f_{0}(r)\,dr
+\frac{s}{1-s^{2}}\int_{0}^{1}\chi_{(s,1)}(r)\, f_{0}(r)\, rdr.
\]

A similar calculation shows that
when $l\geq1,$ then
\[
\left(Tf_{l}(r)\,e^{il\theta}\right)(se^{it})=2\pi e^{i(l-1)t}\left(T_{l}f_{l}\right)(s),
\]
where
\[
\left(T_{l}f_{l}\right)(s)=\frac{1}{1-s^{2}}\int_{0}^{1}\chi_{(s,1)}(r)\left(\frac{s}{r}\right)^{l-1}f_{0}(r)\, rdr.
\]
Similarly, when $l<0,$
\[
\left(Tf_{l}(r)e^{il\theta}\right)(se^{it})=2\pi e^{i(l-1)t}\left(T_{l}f_{l}\right)(s),
\]
where
\begin{align*}
\left(T_{l}f_{l}\right)(s) =& -\left(\underset{n=0}{\overset{-l}{\sum}}\,s^{2n}\right)\int_{0}^{1}\chi_{\left(0,s\right)}(r)\left(\frac{r}{s}\right)^{1-l}f_{l}(r)dr\\
& +\frac{1}{1-s^{2}}\int_{0}^{1}\chi_{(s,1)}(r)\,\left(rs\right)^{1-l}f_{l}(r)dr.
\end{align*}
Hence,
\[
\left(Tf\right)\left(se^{it}\right)=2\pi\underset{l=-\infty}{\overset{\infty}{\sum}}e^{i(l-1)t}\left(T_{l}f_{l}\right)(s),
\]
\[
\mbox{for \ensuremath{\left(T_{l}f_{l}\right)(s)=\begin{cases}
\begin{array}{l}
-(\underset{n=0}{\overset{-l}{\sum}}s^{2n})\int_{0}^{1}\chi_{\left(0,s\right)}(r)\left(\frac{r}{s}\right)^{1-l}f_{l}(r)\,dr\\
\quad +\frac{1}{1-s^{2}}\int_{0}^{1}\chi_{(s,1)}(r)\,\left(rs\right)^{1-l}f_{l}(r)\,dr\quad \text{for }\, l\leq0
\end{array}\\
\; \frac{1}{1-s^{2}}\int_{0}^{1}\chi_{(s,1)}(r)\left(\frac{s}{r}\right)^{l-1}f_{0}(r)\, rdr\quad\quad\;\; \text{for }\, l>0.
\end{cases}}}
\]

\medskip
By our construction,
\[
\Vert Tf\Vert_{A}^{2}=4\,\pi^{2}\underset{l=-\infty}{\overset{\infty}{\sum}}\vert\vert T_{l}f_{l}\vert\vert_{L^{2}\left[0,1\right]}^{2},
\]
where the measure on $L^{2}\left[0,1\right]$ is {}``$rdr$''.
 Thus to prove our lemma, it suffices to prove that
\begin{equation}
 \underset{l}{sup}\,\Vert T_{l}\Vert_{B\left(L^{2}\left[0,1\right],\, L^{2}\left[0,1\right]\right)}\leq5<\infty.\tag{${\star\star}$}
\end{equation}
Once we prove $(\star\star)$, we can conclude that
\[
\Vert Tf\Vert_{A}^{2}\leq100\,\pi^{2}\underset{l=-\infty}{\overset{\infty}{\sum}}\Vert f_{l}\Vert_{L^{2}\left[0,1\right]}^{2}=100\,\pi^{2}\Vert f\Vert_{A}^{2}.
\]

\medskip
 For the case $l=0$, we get that

 \begin{align*}
 & \int_{0}^{1}\left|\left(T_{0}f_{0}\right)(se^{it})\right|^{2}sds\\
 & \;\;\leq  2\int_{0}^{1}\left|-\int_{0}^{1}\chi_{\left(0,s\right)}(r)\,\frac{f_{0}(r)\,-e^{-it}}{s}\: rdr\right|^{2}sds\\
& \quad\; +2\int_{0}^{1}\left|\frac{s}{1-s^{2}}\,(e^{-it})\int_{0}^{1}\chi_{(s,1)}(r)\, f_{0}(r)\, rdr\right|^{2}sds\\
&\;\;\leq  2\int_{0}^{1}\frac{1}{s^{2}}\left[\int_{0}^{1}\chi_{(0,s)}(u)\,\vert f_{0}(u)\vert\, u\, du\,\int_{0}^{1}\chi_{(0,s)}(v)\,\vert f_{0}(v)\vert\, vdv\right]sds\\
& \quad\; +2\int_{0}^{1}\frac{s^{2}}{\left(1-s^{2}\right)^{2}}\left[\int_{0}^{1}\chi_{(s,1)}(x)\vert f_{0}(x)\vert xdx\int_{0}^{1}\chi_{(s,1)}(y)\vert f_{0}(y)\vert ydy\right]sds.
\end{align*}

\medskip
 Let's take the first term,
which is
\begin{align*}
\int_{0}^{1}\int_{0}^{1} & \vert f_{0}(u)\vert\,\vert f_{0}(v)\vert\left(\int_{0}^{1}\chi_{\left(0,s\right)}(u)\,\chi_{\left(0,s\right)}(v)\,\frac{ds}{s}\right)udu\, vdv\\
& =\int_{0}^{1}\int_{0}^{1}\vert f_{0}(u)\vert\,\vert f_{0}(v)\vert\, ln\left(\frac{1}{max\left\{ u,v\right\} }\right)udu\, vdv.
\end{align*}
 By Schur's Test we know that
 \[
 \int_{0}^{1}\int_{0}^{1}\vert f_{0}(u)\vert\,\vert f_{0}(v)\vert\, ln\left(\frac{1}{max\left\{ u,v\right\} }\right)udu\, vdv\leq C_{1}^2\int_{0}^{1}\left|f(u)\right|^{2}u\, du
\]
if and only if there exist $p\geq0$~~a.e. and $C_{1}<\infty$,
satisfying 

\[\int_{0}^{1}ln\left(\frac{1}{max\left\{ u,v\right\} }\right)p(u)\, udu\,\leq C_{1}\, p(v)\]
for $a.e.$ $v\in\left[0,1\right].$

\medskip
 We take $p(u)=1$.  Now
\[
\int_{0}^{v}\,ln\left(\frac{1}{v}\right)\, u\, du=\frac{1}{4}\,\frac{ln\left(\frac{1}{v^{2}}\right)}{\left(\frac{1}{v^{2}}\right)}\leq\frac{1}{4}
\]
 and
\[
 \int_{v}^{1}\,ln\left(\frac{1}{u}\right)u\, du\leq1-v\leq1.
 \]
Therefore,
\[
C_{1}=\frac{1}{4}+1=\frac{5}{4}.
\]

 Again, we will repeat the same argument for the second term,
 \[
\int_{0}^{1}\int_{0}^{1}\vert f_{0}(x)\vert\,\vert f_{0}(y)\vert\left[\int_{0}^{min\left\{ x,y\right\} }\frac{s^{2}}{(1-s^{2})^{2}\,}sds\right]xdx\, ydy.
\]

 Changing variables and computing, we get that
\begin{align*}
 & \int_{0}^{1}\int_{0}^{1}\vert f_{0}(x)\vert\,\vert f_{0}(y)\vert\left[\int_{0}^{min\left\{ x,y\right\} }\frac{s^{2}}{(1-s^{2})^{2}}\,s\,ds\right]xdx\, ydy\\
& \;\leq\int_{0}^{1}\int_{0}^{1}\vert f_{0}(x)\vert\,\vert f_{0}(y)\vert\left[\frac{1}{2}\frac{1}{\left(1-\left(min\left\{ x,y\right\} \right)^{2}\right)}\right]xdx\, ydy.
\end{align*}

\medskip
 For this second term, we take $p(x)=\frac{1}{\sqrt{1-x^{2}}}.$  Therefore,
\[
 \int_{0}^{y}\frac{1}{2}\,\frac{1}{1-x^{2}}\,\frac{1}{\sqrt{1-x^{2}}}\,xdx
  \leq\frac{1}{2}\,\frac{1}{\sqrt{1-y^{2}}}.
\]
 Also,
\[
\int_{y}^{1}\frac{1}{2}\,\frac{1}{1-y^{2}}\,\frac{1}{\sqrt{1-x^{2}}}\,xdx 
 =\frac{1}{2}\frac{1}{\sqrt{1-y^{2}}}.
\]

\medskip
 So we get\, $C_{2}\leq1.$
Hence
\[
\int_{0}^{1}\left|\left(T_{0}f_{0}\right)(s)\right|^{2}sds  \leq \frac{9}{2}\int_{0}^{1}\left|f_0(u)\right|^{2}u du.
\]

\medskip
Applying Schur's Test for $l\geq1$ with $p(u)=\frac{1}{\sqrt{1-u^{2}}}$,
we get the estimate $C_{l}\leq\frac{3}{2}$, independent of $l$.
 Similarly, for $l<0$ with $p(u)=1$ and $p(u)=\frac{1}{\sqrt{1-u^{2}}}$
for each of the two terms, respectively,
we get the estimate $C_{l}\leq5$, independent of $l$ .
 Thus we conclude that
\[
\underset{l}{sup}\,\Vert T_{l}\Vert_{B\left(L^{2}\left[0,1\right],\, L^{2}\left[0,1\right]\right)}\leq5.
\]
This finishes the proof of Lemma 3.
\end{proof}

\bigskip
\begin{lemma}
If \,$Q$ is a multiplier of $\mathcal{D}$, then
\[
\left(1-\vert z\vert^{2}\right)\vert\, Q'(z)\vert\leq\Vert M_{Q}\Vert_{B(\mathcal{D})} \text{ for all } z \in \mathbb{D}.
\]
\end{lemma}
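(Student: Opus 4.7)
The plan is to deduce this directly from the Schwarz--Pick inequality, since the paper has already observed that $\mathcal{M}(\mathcal{D})\subseteq H^{\infty}(\mathbb{D})$ with $\|Q\|_{\infty}\le\|M_{Q}\|_{B(\mathcal{D})}$. Once $Q$ is known to be bounded analytic on $\mathbb{D}$, the Schwarz--Pick lemma applied to $Q/\|Q\|_{\infty}$ yields
\[
\frac{(1-|z|^{2})|Q'(z)|}{\|Q\|_{\infty}^{2}-|Q(z)|^{2}}\cdot\|Q\|_{\infty}\le 1,
\]
which in particular gives $(1-|z|^{2})|Q'(z)|\le\|Q\|_{\infty}\le\|M_{Q}\|_{B(\mathcal{D})}$, as required.

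If one prefers to stay within the reproducing kernel framework (and so avoid invoking Schwarz--Pick as a black box), the alternative is to exploit the identity $M_{Q}^{\star}k_{w}=\overline{Q(w)}\,k_{w}$ together with the ``derivative kernel'' $K_{w}=\partial_{\overline{w}}k_{w}$, which satisfies $f'(w)=\langle f,K_{w}\rangle$ for $f\in\mathcal{D}$. Differentiating Leibniz's rule $(Qf)'(w)=Q'(w)f(w)+Q(w)f'(w)$ in the inner product pairing gives
\[
M_{Q}^{\star}K_{w}=\overline{Q'(w)}\,k_{w}+\overline{Q(w)}\,K_{w},
\]
so that $|Q'(w)|\,\|k_{w}\|\le\|M_{Q}^{\star}-\overline{Q(w)}I\|\,\|K_{w}\|$. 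One would then need an explicit comparison of $\|K_{w}\|/\|k_{w}\|$ with $1/(1-|w|^{2})$, which is where the Dirichlet kernel calculation enters; I expect the arithmetic there to be the only mildly annoying piece, but it is routine since $k_{w}(z)=(z\overline{w})^{-1}\log(1-z\overline{w})^{-1}$ is completely explicit.

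Either route works, but the first is a one-liner once the containment $\mathcal{M}(\mathcal{D})\subseteq H^{\infty}(\mathbb{D})$ is in hand, and that containment is already in the paper. Thus my plan is to simply cite Schwarz--Pick for $H^{\infty}(\mathbb{D})$ and chain it with the norm inequality $\|Q\|_{\infty}\le\|M_{Q}\|_{B(\mathcal{D})}$. There is no real obstacle; the lemma is a quantitative version of ``multipliers are bounded, and bounded analytic functions cannot have too large a derivative near the boundary.''
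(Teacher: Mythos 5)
Your first route is exactly the paper's argument: the paper also normalizes $Q$ (there by $\Vert M_Q\Vert$, you by $\Vert Q\Vert_\infty$, which is equivalent after chaining with $\Vert Q\Vert_\infty\le\Vert M_Q\Vert$) and applies the Schwarz--Pick lemma to the resulting self-map of $\mathbb{D}$. This is correct and matches the paper's proof; the reproducing-kernel alternative you sketch is not needed.
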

\begin{proof}
Define\; $\varphi:D\rightarrow D$
as $\varphi(z)=\frac{Q(z)}{\Vert M_{Q}\Vert}$ for all\, $z\in \mathbb{D}$. Now use the Schwarz lemma and the fact that
$\Vert \varphi \Vert_{\infty,\mathbb{D}} \leq \Vert M_{\varphi} \Vert$ to complete the proof.
\end{proof}

\newpage
 We are now ready to prove Theorem 1.

\begin{proof}  First, we will prove the theorem for smooth functions
on $\overline{\mathbb{D}}$ and get a uniform bound. Then we will remove
the smoothness hypothesis.

\medskip
Assume that (a) and (b) of Theorem 1 hold for $F$ and $H$ and that $F$ and $H$ are analytic on $\mathbb{D}_{1+ \epsilon}(0)$.

\medskip
Then our main goal is to show that there exists a constant $K<\infty$, independent of $\epsilon$, so that for any polynomial,  $h$,
there exists $\underline{u}_{h}\in\overset{\infty}{\underset{1}{\oplus}}\,\mathcal{D}$
such that
$M_{F}^{R}(\underline{u}_{h})=H^{3}h$ and $\Vert\underline{u}_{h}\Vert_{\mathcal{D}}^{2}\leq K\,\Vert h\Vert_{\mathcal{D}}^{2}$.

\medskip
We  take $\underline{u}_{h}=\frac{F^{\star}H^{3}h}{FF^{\star}}-Q\widehat{\left(\frac{Q^{\star}F'^{\star}H^{3}h}{\left(FF^{\star}\right)^{2}}\right)}$.
Then  $\underline{u}_{h}$ is analytic and $M_{F}^{R}(\underline{u}_{h})=H^{3}h.$

\medskip
We know that
\[
\left\Vert \underline{u}_{h}\right\Vert _{\mathcal{D}}^{2}=\int_{-\pi}^{\pi}\Vert\underline{u}_{h}(e^{it})\Vert^{2}\,d\sigma(t)+\int_{D}\Vert\left(\underline{u}_{h}(z)\right)^{'}\Vert^{2}\,dA(z).
\]
Condition (b) implies that
\[
\int_{-\pi}^{\pi}\Vert\frac{F^{\star}H^{3}h}{FF^{\star}}-Q\,\widehat{\left(\frac{Q^{\star}F'^{\star}H^{3}h}{\left(FF^{\star}\right)^{2}}\right)}\Vert^{2}\,d\sigma(t)\leq C_{0}^2\Vert h\Vert_{\sigma}^{2},
 \]
where $C_{0}$ can be chosen to be 15  (See [Tr1]).
Hence, we only need to show that
\[
\int_{D}\Vert\left(\frac{F^{\star}H^{3}h}{FF^{\star}}-Q\,\widehat{\left(\frac{Q^{\star}F'^{\star}H^{3}h}{\left(FF^{\star}\right)^{2}}\right)}\right)^{'}\Vert^{2}\,dA(z)\leq C^2\Vert h\Vert_{\mathcal{D}}^{2}
\]
for some $C<\infty.$

\medskip
Now
\begin{align*}
\int_{D}& \Vert\left(\frac{F^{\star}H^{3}h}{FF^{\star}}-Q\widehat{\left(\frac{Q^{\star}F'^{\star}H^{3}h}{\left(FF^{\star}\right)^{2}}\right)}\right)^{'}\Vert^{2}\,dA(z)\\
& \quad \leq2\int_{D}\Vert\left(\frac{F^{\star}H^{3}h}{FF^{\star}}\right)^{'}\Vert^{2}\,dA(z)+2\int_{D}\Vert\left(Q\widehat{\left(\frac{Q^{\star}F'^{\star}H^{3}h}
{\left(FF^{\star}\right)^{2}}\right)}\right)^{'}\Vert^{2}\,dA(z)\\
& \quad \leq4\underset{(a')}{\underbrace{\int_{D}\Vert\frac{F^{\star}3H^{2}H'h}{FF^{\star}}\Vert^{2}\,dA(z)}}+8\underset{(b')}
{\underbrace{\int_{D}\Vert\frac{F^{\star}H^{3}h'}{FF^{\star}}\Vert^{2}\,dA(z)}}\\
& \qquad +8\underset{(c')}{\underbrace{\int_{D}\Vert\frac{F^{\star}H^{3}h'F'F^{\star}}
{\left(FF^{\star}\right)^{2}}\Vert^{2}\,dA(z)}}
+4\underset{(d')}{\underbrace{\int_{D}\Vert Q'\widehat{\left(\frac{Q^{\star}F'^{\star}H^{3}h}{\left(FF^{\star}\right)^{2}}\right)}
\Vert^{2}\,dA(z)}}\\
& \qquad +4\underset{(e')}{\underbrace{\int_{D}\Vert Q\left(\widehat{\frac{Q^{\star}F'^{\star}H^{3}h}{\left(FF^{\star}\right)^{2}}}\right)^{'}\Vert^{2}\,dA(z)}}.
\end{align*}

Then
\begin{align*}
(a')=\int_{D}\Vert\frac{F^{\star}3H^{2}H'h}{FF^{\star}}\Vert^{2}\,dA(z)& =9\int_{D}\Vert\frac{F^{\star}}{\sqrt{FF^{\star}}}\frac{H}{\sqrt{FF^{\star}}}H\, H\,'h\Vert^{2}\,dA(z)\\
& \leq9\int_{D}\Vert H\,'h\Vert^{2}\,dA(z)\\
& \leq18\left(\Vert M_{H}\Vert^{2}+\Vert H\Vert_{\infty}^{2}\right)\Vert h\Vert_{\mathcal{D}}^{2}\\
& \leq36\,\Vert M_{H}\Vert^{2}\,\Vert h\Vert_{\mathcal{D}}^{2}.
\end{align*}
\noindent
$\,(b')=\,\int_{D}\Vert\frac{F^{\star}H^{3}h'}{FF^{\star}}\Vert^{2}\,dA(z)\leq\int_{D}\Vert h'\Vert^{2}\,dA(z)\leq\Vert h\Vert_{\mathcal{D}}^{2}.$
\begin{align*}
(c')=\int_{D}\Vert\frac{F^{\star}H^{3}hF'F^{\star}}{\left(FF^{\star}\right)^{2}}\Vert^{2}\,dA(z)& =\int_{D}\Vert\frac{F^{\star}F'F^{\star}}{\sqrt{FF^{\star}}}\frac{H^{2}}{FF^{\star}}\frac{H}{\sqrt{FF^{\star}}}\,h\Vert^{2}\,dA(z)\\
& \leq\int_{D}\Vert\frac{F^{\star}F'F^{\star}}{\sqrt{FF^{\star}}}\,h\Vert^{2}\,dA(z)\\
& \leq\int_{D}\Vert F'^{\star}h\Vert^{2}\,dA(z)\leq4\,\Vert h\Vert_{\mathcal{D}}^{2}.
\end{align*}
 We use condition (a) of the theorem  and the boundedness of the Beurling transform on $L^2(\mathbb{D},dA)$ (with bound $14$) to conclude that
 \[
 (e')\leq56 ( {14)^2} \int_{D}\left|\vert F'^{\star}h\vert\right|^{2}dA\leq224(14)^2\Vert h\Vert_{\mathcal{D}}^{2}.
 \]
 So we only need estimate $(d')$.
For this, we have
\[
\int_{D}\Vert Q^{'}\,\,\widehat{\left(\frac{Q^{\star}F^{'\star}H^3h}{\left(FF^{\star}\right)^{2}}\right)}\Vert^{2}\,dA(z)=\int_{D}\Vert Q^{'}\widehat{w}\Vert^{2}\,dA(z),
\]
where $\widehat{w}=\widehat{\left(\frac{Q^{\star}F^{'\star}H^3h}{\left(FF^{\star}\right)^{2}}\right)}$
is a smooth function on $\overline{\mathbb{D}}$.

\medskip
Therefore,
\[
\int_{D}\Vert Q^{'}\,\widehat{w}\Vert^{2}\,dA(z)\leq2\underset{(\alpha)}{\underbrace{\int_{D}\Vert Q^{'}\widehat{w}-Q^{'}\widetilde{\widehat{w}}\Vert^{2}\,dA(z)}}+2\int_{D}\Vert Q^{'}\widetilde{\widehat{w}}\Vert^{2}\,dA(z),
\]
where $\widetilde{\widehat{w}}(z)=\int_{-\pi}^{\pi}\frac{1-\left|z\right|^{2}}{\left|1-e^{-it}z\right|}\,\widehat{w}(e^{it})\, d\sigma(t)$
is the harmonic extension of $\widehat{w}$ from $\partial \mathbb{D}$ to $\mathbb{D}$.

\medskip
 Lemma (2) tells us that
 \[
 \int_{D}\Vert Q^{'}\widetilde{\widehat{w}}\Vert^{2}\,dA(z)\leq8\,\Vert\widetilde{\widehat{w}}\Vert_{\mathcal{HD}}^{2}.
 \]
 Also, a lemma of {[}Tr2{]} implies that
 \[
 \Vert\widetilde{\widehat{w}}\Vert_{\mathcal{HD}}^{2}\leq\Vert w\Vert_{A}^{2}+\Vert\widehat{w}\Vert_{\sigma}^{2}.
 \]
But, as we showed above
\[
\Vert w\Vert_{A}^{2}=\int_{D}\Vert\frac{Q^{\star}F^{'\star}H^3h}{\left(FF^{\star}\right)^{2}}\Vert^{2}\,dA(z)\leq\int_{D}\Vert F^{'\star}h\Vert^{2}\,dA(z)
\leq4\,\Vert h\Vert_{\mathcal{D}}^{2}
\]
and
\[
\Vert\widehat{w}\Vert_{\sigma}^{2}=\int_{-\pi}^{\pi}\Vert\left(\widehat{\frac{Q^{\star}F^{'\star}H^3h}{\left(FF^{\star}\right)^{2}}}\right)\Vert^{2}\,d\sigma(t)\leq 15\left\Vert h\right\Vert _{\sigma}^{2}.
\]
from [Tr2].

\medskip
Thus,
\[
\int_{D}\Vert Q^{'}\widetilde{\widehat{w}}\Vert^{2}\,dA(z)\leq8\,\left[4\,\Vert h\Vert_{\mathcal{D}}^{2}+15\left\Vert h\right\Vert_{\sigma}^{2}\right].
\]

 Now we are just left with estimating $(\alpha)$. We will use Lemmas
3 and 4.
We have
\begin{align*}
(\alpha)& =\int_{D}\Vert Q^{'}\widehat{w}-Q^{'}\widetilde{\widehat{w}}\Vert^{2}\, dA(z)\\
& =\int_{D}\Vert Q^{'}\left[-\frac{1}{\pi}\int_{D}\frac{w(u)}{u-z}dA(u)-\int_{-\pi}^{\pi}\frac{1-\left|z\right|^{2}}{\left|1-e^{-it}z\right|}\widehat{w}(e^{it})d\sigma(t)\right]\Vert^{2}dA(z)\\
\end{align*}
\begin{multline*}
\qquad\, =\frac{1}{\pi^{2}}\int_{D}\Vert Q^{'}\int_{D}w(u)\bigg[\frac{1}{u-z}\\
 +\int_{-\pi}^{\pi}\frac{1-\left|z\right|^{2}}{\left|1-e^{-it}z\right|}\, e^{-it}\frac{1}{1-ue^{-it}}\,d\sigma(t)\bigg]dA(u)\Vert^{2}\,dA(z)
\end{multline*}
\begin{align*}
& =\frac{1}{\pi^{2}}\int_{D}\Vert Q^{'}\int_{D}w(u)\left[\frac{1}{u-z}+\frac{\bar{z}}{1-u\bar{z}}\right]dA(u)\Vert^{2}\,dA(z)\\
& =\frac{1}{\pi^{2}}\int_{D}\Vert Q'\int_{D}w(u)\left[\frac{1-\left|z\right|^{2}}{(u-z)(1-u\bar{z\,})}\right]dA(u)\Vert^{2}\,dA(z)\\
& =\frac{1}{\pi^{2}}\int_{D}\Vert Q'(z)\,(1-\vert z\vert^{2})\,T(w)(z)\Vert^2 \,dA(z)\\
& \leq\frac{\Vert M_{Q}\Vert^{2}}{\pi^{2}} \Vert T(w)\Vert^{2}_{A}  \;\text{   by Lemma 4  }\\
& \leq100\,\pi^{2}\,\frac{\Vert M_{Q}\Vert^{2}}{\pi^{2}}\,\Vert w\Vert^2 \;
\text{   by Lemma 3} \\
& \leq100\,\Vert M_{Q}\Vert^{2}\,\Vert h\Vert_{\mathcal{D}}^{2} \leq 1800 \Vert h\Vert_{\mathcal{D}}^{2}.
\end{align*}

\medskip \noindent
Combining all these pieces, we see that in the smooth case
\[
\left\Vert \underline{u}_{h}\right\Vert_{\mathcal{D}}^{2}\leq K\left\Vert h\right\Vert _{\mathcal{D}}^{2}\;
\]
for some constant $K< \infty$, which is independent of $h$ and $\epsilon >0$.

\medskip
 By the proof of Theorem 1 in the smooth case, we have
\[
 M_{F_{r}}^{R}(M_{F_{r}}^{R})^{\star}\leq K^{2}M_{H_{r}}M_{H_{r}}^{\star} \;
\text {for } 0\leq r<1.
\]
 Using a commutant lifting argument, there exists $G_{r}\in\mathcal{M}(\mathcal{D},\,\underset{1}{\overset{\infty}{\oplus}}\,\mathcal{D}$)
so that $M_{F_{r}}^{R}M_{G_{r}}^{C}=M_{H_{r}^3}$ and $\Vert M_{G_{r}}^{R}\Vert\leq K$.
Then $M_{F_{r}}^{R}\rightarrow M_{F}^{R}$ and $M_{H_{r}}\rightarrow M_{H}$
as $r\uparrow1$ in the $\star-$strong topology.

\medskip
 By compactness, we may choose a net with $G_{r_{\alpha}}^{\star}\rightarrow G^{\star}$
as $r_{\alpha}\rightarrow1^{-}.$ Since the multiplier algebra (as
operators) is WOT closed, $G\in\mathcal{M}(\mathcal{D},\overset{\infty}{\underset{1}{\oplus}}\,\mathcal{D}).$
Also, since $F_{r_{\alpha}}^{\star}\overset{s}{\rightarrow}F^{\star},$
we get $M_{H_{r}}^{\star}=M_{G_{r}}^{\star C}M_{F_{r}}^{\star R}\,\,\overset{WOT}{\rightarrow}\, M_{G}^{\star C}M_{F}^{\star R}$
and so $M_{F}^{R}M_{G}^{C}=M_{H^3}$ with entries of $G$ in $\mathcal{M}(\mathcal{D})$
and $\Vert M_{G}^{C}\Vert\leq K.$

\medskip

It might be of some interest to note that the norm of the operator, $\Vert  M_G^{C} \Vert$,
doesn't exceed $\sqrt{144 \Vert M_H \Vert^2 + 73,104}$.

\medskip 
This ends our proof.
\end{proof}

\bigskip
\newpage

Just as Wolff gets Theorem B for free, we get

\medskip
\begin{theorem}
Let $\{H, f_j: \, j=1, \dots, n\} \subset \mathcal{M}(\mathcal{D})$.  Then \, $H \in
Rad \, ( \{ f_j\}_{j=1}^n)$ \, if and only if there exist $C_0 < \infty$ and $ m \in \mathbb{N}$ such that
\[
|H^m(z)| \le C_0  \sum_{j=1}^n \, |f_j(z)|^2 \; \text{ for all } z \in \mathbb{D}.
\]
\end{theorem}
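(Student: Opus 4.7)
Since Theorem 1 has just been established, the plan is to derive both directions of Theorem 2 by standard algebraic manipulations, with the real analytic content already absorbed into Theorem 1.

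For the forward direction, I would start from a representation $H^{n_0} = \sum_{j=1}^n f_j g_j$ with $g_j \in \mathcal{M}(\mathcal{D})$ (which exists because $H^{n_0} \in \mathcal{I}(\{f_j\}_{j=1}^n)$ for some $n_0 \in \mathbb{N}$). Applying Cauchy--Schwarz pointwise gives
\[
|H^{n_0}(z)|^2 \le \Bigl(\sum_{j=1}^n |g_j(z)|^2\Bigr)\Bigl(\sum_{j=1}^n |f_j(z)|^2\Bigr),
\]
and the containment $\mathcal{M}(\mathcal{D}) \subseteq H^{\infty}(\mathbb{D})$ together with the finiteness of the family bounds the first factor by $C_0 := \sum_j \|g_j\|_{\infty}^{2} < \infty$. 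This yields the desired estimate with $m = 2n_0$.

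For the reverse direction, the key observation is that the hypothesis $|H^m| \le C_0 \sum_j |f_j|^2$ is a linear rather than a square-root bound in $\sum_j |f_j|^2$. Squaring and using $\bigl(\sum_j |f_j|^2\bigr)^{2} = \sum_{j,k} |f_j f_k|^{2}$ rewrites it as
\[
|H^m(z)| \le C_0 \Bigl(\sum_{j,k=1}^{n} |f_j(z)\, f_k(z)|^{2}\Bigr)^{1/2},
\]
which is precisely condition (b) of Theorem 1 for the auxiliary finite family $\tilde F = (f_j f_k)_{j,k=1}^{n}$ in place of $F$ and for $H^m$ in place of $H$. Because $\mathcal{M}(\mathcal{D})$ is an algebra and $\tilde F$ is finite, $\|M_{\tilde F}^{C}\| < \infty$, so after dividing $\tilde F$ and $H^m$ by appropriate positive constants I can arrange that condition (a) of Theorem 1 also holds.

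Applying Theorem 1 to the rescaled data produces a finite family $\{g'_{j,k}\} \subset \mathcal{M}(\mathcal{D})$ such that $\sum_{j,k} f_j\, f_k\, g'_{j,k}$ equals a nonzero constant multiple of $H^{3m}$. Reorganizing this as $\sum_{j=1}^{n} f_j \bigl(\sum_{k=1}^{n} f_k\, g'_{j,k}\bigr)$ and noting that each inner sum lies in $\mathcal{M}(\mathcal{D})$ (again because that algebra is closed under finite sums and products) shows $H^{3m} \in \mathcal{I}(\{f_j\}_{j=1}^{n})$, hence $H \in \mathrm{Rad}(\{f_j\}_{j=1}^{n})$. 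I do not foresee a genuine obstacle: the only care required is in tracking the normalizing constants so that hypothesis (a) of Theorem 1 is genuinely satisfied after rescaling, and it is the finiteness of the generating family that keeps all the relevant norms finite.
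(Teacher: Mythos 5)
Your argument is correct in both directions. The forward implication is the standard Cauchy--Schwarz estimate, and it matches what any proof of this corollary would do. For the reverse implication you take a route that is valid but not the one that Wolff's (and the paper's implicit) ``for free'' argument uses: the usual step is to observe that, since $\{f_j\}_{j=1}^n$ is a \emph{finite} subset of $\mathcal{M}(\mathcal{D})\subset H^\infty(\mathbb{D})$, one has $\sup_{z\in\mathbb{D}}\sum_{j=1}^n|f_j(z)|^2\le M<\infty$, so
\[
|H^m(z)|\;\le\;C_0\sum_{j=1}^n|f_j(z)|^2\;\le\;C_0\,M^{1/2}\Bigl(\sum_{j=1}^n|f_j(z)|^2\Bigr)^{1/2},
\]
and then Theorem~1 is applied directly to the original family $F$ (after normalizing so that $\Vert M_F^C\Vert\le 1$ and dividing $H^m$ by the resulting constant), yielding $H^{3m}\in\mathcal{I}(\{f_j\})$. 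You instead pass to the $n^2$-element family $\tilde F=(f_jf_k)_{j,k=1}^n$, exploit the algebraic identity $\bigl(\sum_j|f_j|^2\bigr)^2=\sum_{j,k}|f_jf_k|^2$ to manufacture a square-root hypothesis without invoking the pointwise $\sup$ bound, apply Theorem~1 to $\tilde F$, and then regroup $\sum_{j,k}f_jf_kg'_{j,k}=\sum_j f_j\bigl(\sum_k f_k g'_{j,k}\bigr)$ to land back in $\mathcal{I}(\{f_j\})$. This is correct --- the regrouping is legitimate since $\mathcal{M}(\mathcal{D})$ is an algebra and the family is finite, so each inner sum is a multiplier --- and it reaches the same power $H^{3m}$. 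The trade-off is that you enlarge the generating set from $n$ to $n^2$ elements and must renormalize the larger column operator, whereas the direct argument stays with the original $F$; but since both appeals to Theorem~1 only need finiteness of $\Vert M^C\Vert$, which the finite family guarantees, nothing is lost. One small terminological slip: you describe the step as ``squaring,'' when what you actually do is rewrite $\sum_j|f_j|^2$ as the square root of its own square; the inequality itself is never squared.
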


\bigskip
This paper discusses when  $H^3$ belongs to $\mathcal{I} ( \{ f_j\}_{j=1}^n)$, the ideal generated by $\{ f_j\}_{j=1}^n$ in $\mathcal{M}(\mathcal{D})$ and characterizes membership in the radical of the ideal,  $\mathcal{I} ( \{ f_j\}_{j=1}^n)$. The question of strong sufficient conditions for $H$ itself to belong to $\mathcal{I} ( \{ f_j\}_{j=1}^n)$ is more subtle. The first author has obtained some interesting results in this direction.

\bigskip

\end{document}